\nonstopmode \numberwithin{equation}{section}
\newtheorem{theorem}{Theorem}
\newtheorem{corollary}{Corollary}[section]
\begin{document}
\title[ Fractional Distributed Order Reaction-Diffusion \dots]{Solution of fractional Distributed Order Reaction-Diffusion Systems with Sumudu Transform.}
\author{K S Nisar}
\address{Department of Mathematics, College of Arts and Science-Wadi Aldawaser%
\\
Prince Sattam bin Abdulaziz University, Alkharj, Saudi Arabia}
\email{n.sooppy@psau.edu.sa, ksnisar1@gmail.com}
\author{Z.M. Gharsseldien}
\address{a) Department of Mathematics, College of Arts and Science-Wadi Aldawaser, 11991%
\\
Prince Sattam bin Abdulaziz University, Saudi Arabia}
\address{b) Department of Mathematics, Faculty of Science, 11884, Al-Azhar University, Cairo,Egypt.}
\email{gharsseldien\_z@yahoo.com }
\author{F.B.M. Belgacem}
\address{Department of Mathematics,  Faculty of Basic Education, PAAET, Al-Ardhiya, Kuwait}
\email{fbmbelgacem@gmail.com}
\keywords{Laplace Transform, Sumudu transform, Reaction-Diffusion Systems,
Mittag-Leffler function.}

\begin{abstract}
The solution of some fractional differential equations is the hottest topic in fractional calculus field. The fractional distributed order reaction-diffusion equation   is the aim of this paper. By applying integral transform to solve this type of fractional differential equations, we have obtained the analytical solution by using Laplace-Sumudu transform.

\textbf{Key words~~~:}Laplace Transform, Sumudu transform, Reaction-Diffusion Systems,
Mittag-Leffler function

\textbf{2010 Mathematics Subject Classification~~~:}44A10, 44A20, 35K57, 33E12 
\end{abstract}

\maketitle

\section{Introduction}

In recent decades, researchers care study and applications of fractional calculus, which is considered 
one of the most interesting topics in applied mathematics. The applications of fractional integral operator 
involving various special functions has found in various sub fields such as statistical distribution theory, 
control theory, fluid dynamics, stochastic dynamical system, plasma, image processing, nonlinear biological 
systems, astrophysics, and in quantum mechanics (see \cite{Balenu-b1}, \cite{Kiriyakova-b1}, \cite{Balenu-b2}). 
The great use of different transforms in quantum mechanics gives wide variety  of results, so it attracted 
mathematicians and physicists to pay more attention (\cite{Laskin1}-\cite{Laskin3}). 
Many more related works found in (\cite{Naber}, \cite{Saxena}, \cite{Saxena1},
\cite{Saxena2}, \cite{Saxena3},\cite{Haubold},\cite{Henry},\cite{Debnath1},] and there in.

\section{Mathematical background}

Starting this section, by providing the definition of Laplace transform and its inverse, then the definition of Sumudu transform  and its  inverse for the function of  function $f(x,t) $. these definitions are given as:

If a continuous or piecewise continuous function $f(x,t)$ is assumed as a function of  order ($e^t$) when $t\rightarrow \infty $, then the Laplace transform with respect to $t$  and its inverse with respect to $s$  is respectively given by
\begin{equation}
\mathcal{L}\{f(x,t);s\} =\mathcal{F}(x,s)=\int_{0}^{\infty }e^{-st}f(x,t) dt,~~~( t>0) ,~~~(x\in R)   \label{1}
\end{equation}
such that, $\Re(s)>0,$ and 
\begin{equation}
\mathcal{L}^{-1}\{\mathcal{F}(x,s)\} =f(x,t)=\frac{1}{2\pi i}\int_{\nu -i\infty }^{\nu +i\infty }e^{st}\mathcal{F}(x,s)ds,  \label{2}
\end{equation}
where $\nu \in \mathbb{R}$ is fixed.

In this paper, we concern our attention to two definitions based on Riemann-Liouville (R-L) definition of fractional integral of order $\nu >0$ which given by \cite{Miller},

\begin{eqnarray}
 \nonumber
 _{0}I_{t}^{\nu }f(x,t) &=& \frac{1}{\Gamma (\nu)}\int_{0}^{t }\frac{f(x,u)}{(t-u)^{1-\nu}} du ~~~~~ \text{where} \Re( \nu ) >0 \label{2a}  \\
 _{0}I_{t}^{0 }f(x,t) &=& f(x,t)
\end{eqnarray}
  \begin{itemize}
    \item ) The first one is Riemann-Liouville definition of fractional derivative of order $\alpha >0$, and it may be given as follows \cite{Samko}
\begin{equation}
_{0}D_{t}^{\alpha }f(x,t)=\frac{d^{m}}{dt^{m}}  \left ( _{0}I_{t}^{m-\alpha }f(x,t)\right ) ,~~~( t>0) ,~~~(m=[\alpha+1])   \label{2b}
\end{equation}
where $[\alpha]$ is the integer part of the number $\alpha $.
    \item ) The second is Caputo derivative \cite{Caputo}, given in the form
    \begin{equation}
_{0}^{c}D_{t}^{\alpha }f(x,t)=\left\{
                                \begin{array}{ll}
                                  _{0}I_{t}^{m-\alpha }f^{(m)}(x,t), & ~~~\text{where}~~ m-1<\alpha\leq m, Re(\alpha)>0,m\in N; \\
                                  f^{(m)}(x,t), & ~~~\text{where}~~ \alpha=m.
                                \end{array}
                              \right..\label{2c}
\end{equation}
where $ f^{(m)}(x,t)=\dfrac{\partial ^{m}f}{\partial t^{m}}$ is m$^{th}~$.
  \end{itemize}

 The Laplace transform of both definitions\cite{Oldham}:
\begin{eqnarray}
  \mathcal{L}\{_{0}D_{t}^{\alpha }f(x,t);s\} &=& s^{\alpha}\mathcal{F}(x,s)-\sum_{r=1}^{m}s^{r-1}~_{0}D_{t}^{\alpha-r}f(x,t) \vert_{t=0}. \\
  \mathcal{L}\{ _{0}^{c}D_{t}^{\alpha }f( x,t) ;s\} &=& s^{\alpha}\mathcal{F}( x,s)-\sum_{r=0}^{m-1}s^{\alpha -r-1}f^{( r)}( x,0+)
\end{eqnarray}
where  $m-1<\alpha \leq m$, and $\Re(s)>0.$

\bigskip The Sumudu transform over the set functions

\begin{equation*}
A=\left\{ f( t) \left\vert \exists ~M,\tau _{1},\tau
_{2}>0,\left\vert f( t) \right\vert <Me^{\left\vert t\right\vert
/\tau _{j}}\right. ,~\text{if }t\in ( -1) ^{j}\times \lbrack
0,\infty )\right\}
\end{equation*}

is defined by

\begin{equation}
G( u) =S\left[ f( t) \right] =\int_{0}^{\infty
}f( ut) e^{-t}dt,~u\in ( -\tau _{1},\tau _{2})
\label{3}
\end{equation}

The detailed literature of Sumudu transform is found in (\cite{Asiru},\cite%
{Belgacem1},\cite{Belgacem2}). The following results due to \cite{VBL}

\begin{equation}
S^{-1}\left[ u^{\gamma -1}( 1-wu^{\beta }) ^{-\delta }\right]
=t^{\gamma -1}E_{\beta ,\gamma }^{\delta }( wt^{\beta })
\label{4}
\end{equation}

by interpreting it with the help of the formula $S^{-1}\left[ u^{n}:t\right] =%
\frac{t^{n}}{\Gamma ( n+1) }$ $,$gives%
\begin{equation}
S^{-1}\left[ \frac{u^{-\rho }}{u^{-\alpha }+\lambda u^{-\beta }+b}\right]
=\sum_{r=0}^{\infty }( -\lambda)^{r}t^{( \alpha -\beta )
r+\alpha -\rho }E_{\alpha ,\alpha +( \alpha -\beta ) r-\rho
+1}^{r+1}( -bt^{\alpha })  \label{5}
\end{equation}
and for $\lambda=0$ we get
\begin{equation}
S^{-1}\left[ \frac{u^{-\rho }}{u^{-\alpha }+b}\right]
=t^{\alpha -\rho }E^1_{\alpha ,\alpha -\rho
+1}( -bt^{\alpha })  \label{6}
\end{equation}
where $E_{\alpha ,\beta }^{\delta}( z) $ is the Mittag-Leffler function
\cite{Erdelyi} is in the form%
\begin{equation}
E_{\alpha ,\beta }^{\delta}( z) =\sum_{n=0}^{\infty }\frac{(\delta)_nz^{n}}{\Gamma
( n\alpha +\beta )n! },( \alpha ,\beta ,\delta \in \mathbb{C},\Re( \alpha
) >0,\Re( \beta ) >0,\Re( \delta) >0)  \label{Mittag}
\end{equation}
where $(\delta)_n$ is Pochhammer symbol.The Sumudu convolution theorem in \cite{Belgacem2} is
\begin{equation}
S^{-1}\left[ u \,S[\psi(t)]\, S[\phi(t)]\right]=(\psi\ast \phi)(t) = \int_0^t \psi(\xi)\,\phi(t-\xi)d\xi  \label{6a}
\end{equation}
The Sumudu transform of the Riemann-Liouville derivative \cite{Belgacem1} is
given as:
\begin{equation}
S\left[ _{0}D_{t}^{\alpha }f( x,t) ;u\right] =u^{-\alpha }\mathbb{F}( x,u) -\sum_{k=1}^{m}\frac{ \left [ _{0}D_{t}^{\alpha -k}f(x,t)\right ]_{t=0}}{u^{k}},~~( m-1<\alpha \leq m)  \label{6b}
\end{equation}
and for Caputo's derivative is given as:

\begin{equation}
S\left[ _{0}^{C}D_{t}^{\alpha }f( x,t) ;u\right] =u^{-\alpha }\mathbb{F}( x,u) -\sum_{k=0}^{m-1}\frac{1}{u^{\alpha -k}} \left [ \frac{\partial^k f(x,t)}{\partial t^k}\right ]_{t=0},~~( m-1<\alpha \leq m) \label{7}
\end{equation}

\section{Main Results}

Here we deals with the solution of fractional of reaction diffusion
equation

\begin{theorem}
The one dimensional fractional non-homogeneous reaction diffusion
system is defined by%
\begin{equation}
_{0}D_{t}^{\alpha }N( x,t) =\sum_{j=1}^{n}\mu _{j~x}D_{\theta
_{j}}^{\gamma _{j}}N( x,t) +\phi ( x,t)  \label{7a}
\end{equation}

where $t>0,\,x\in \mathbb{R},$ and, the parameters  $\alpha ,\,\theta _{1},...,\theta _{n},\,\gamma
_{1},...,\gamma _{n}$ are real. These parameters satisfy the following condition:%
\begin{equation}
\mu _{j}>0,\, 0<\gamma _{j}\leq 2,j=1,2,...,n,\left\vert \theta _{j}\right\vert
\leq \min_{1\leq j\leq n}( \gamma _{j},2-\gamma _{j}) ,1<\alpha
\leq 2  \label{8}
\end{equation}%
with initial conditions
\begin{equation}
\left(
  \begin{array}{c}
    _{0}D_{t}^{\alpha -1}N( x,t) \\
    _{0}D_{t}^{\alpha-2}N( x,t) \\
  \end{array}
\right)_{t=0}=\left(
                \begin{array}{c}
                  f( x) \\
                  g( x) \\
                \end{array}
              \right)
,\,x\in \mathbb{R},\,\lim_{x\rightarrow
\pm \infty }N( x,t) =0,t>0  \label{9}
\end{equation}
where $_{0}D_{t}^{\nu }N( x,t) $ is the R-L
fractional derivative of N$( x,t) $ with respect to t of order $%
\nu $. The Riesz-Feller (R-L) space fractional derivative $_{x}D_{\theta
_{1}}^{\gamma _{1}},...,_{x}D_{\theta _{n}}^{\gamma _{n}}$  with asymmetries $\theta
_{1},...,\theta _{n}~$ respectively,$\mu _{1,...,}\mu _{n}$ are arbitrary
constants, and, the functions $f( x) ,g( x) $ and $\phi ( x,t) $ are given. Then the solution of $( \ref{7a}) $ is:%
\begin{eqnarray}
N( x,t) &=&\frac{t^{\alpha -1}}{\sqrt{2\pi }}\int_{-\infty
}^{+\infty }e^{-ikx}E_{\alpha ,\alpha }[ -b(k)t^{\alpha }] f^{\ast
}( k) dk  \notag \\
&&+\frac{t^{\alpha-2 }}{\sqrt{2\pi }}\int_{-\infty }^{+\infty
}e^{-ikx}E_{\alpha ,\alpha -1}[ -b(k)t^{\alpha }] g^{\ast }(
k) dk  \notag \\
&&+\frac{1}{\sqrt{2\pi }}\int_{0}^{t}\xi ^{\alpha -1}\int_{-\infty }^{+\infty
}E_{\alpha ,\alpha }[ -b(k)\xi ^{\alpha }] e^{-ikx}\phi ^{\ast
}( k,t-\xi ) dkd\xi  \label{10}
\end{eqnarray}

where $E_{\alpha ,\beta }( .) $ is  defined in $( \ref{Mittag}) $
\end{theorem}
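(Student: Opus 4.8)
The plan is to diagonalize the spatial and temporal operators separately by a composite integral transform: a Fourier transform in $x$, followed by a Sumudu transform in $t$, which turns the fractional PDE \eqref{7a} into a single algebraic equation in the double-transform variables $(k,u)$.

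First I would apply the Fourier transform in the space variable, using the symmetric convention $f^{\ast}(k)=\tfrac{1}{\sqrt{2\pi}}\int_{-\infty}^{+\infty}e^{ikx}f(x)\,dx$, and likewise for $N$, $g$, and $\phi$. The decisive ingredient is the Fourier symbol of the Riesz--Feller derivative,
\[
\int_{-\infty}^{+\infty}e^{ikx}\,{}_{x}D_{\theta_j}^{\gamma_j}N(x,t)\,dx=-|k|^{\gamma_j}e^{i\,\mathrm{sign}(k)\theta_j\pi/2}\,N^{\ast}(k,t),
\]
whose validity is exactly what the admissibility constraints $0<\gamma_j\le2$ and $|\theta_j|\le\min_{1\le j\le n}(\gamma_j,2-\gamma_j)$ in \eqref{8} are there to guarantee. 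Summing over $j$ and setting $b(k)=\sum_{j=1}^{n}\mu_j|k|^{\gamma_j}e^{i\,\mathrm{sign}(k)\theta_j\pi/2}$ collapses the entire spatial operator into multiplication by $-b(k)$, so \eqref{7a} becomes the time-fractional ODE ${}_{0}D_t^{\alpha}N^{\ast}(k,t)=-b(k)N^{\ast}(k,t)+\phi^{\ast}(k,t)$.

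Next I would apply the Sumudu transform in $t$, writing $\mathcal{N}(k,u)=S[N^{\ast}(k,\cdot);u]$ and invoking the Riemann--Liouville rule \eqref{6b} with $m=2$ (since $1<\alpha\le2$); its two boundary contributions are precisely the prescribed data $[{}_{0}D_t^{\alpha-1}N]_{t=0}=f(x)$ and $[{}_{0}D_t^{\alpha-2}N]_{t=0}=g(x)$ from \eqref{9}. This gives
\[
u^{-\alpha}\mathcal{N}(k,u)-\frac{f^{\ast}(k)}{u}-\frac{g^{\ast}(k)}{u^{2}}=-b(k)\mathcal{N}(k,u)+S[\phi^{\ast}(k,\cdot);u],
\]
and solving the linear relation yields
\[
\mathcal{N}(k,u)=\frac{u^{-1}}{u^{-\alpha}+b(k)}\,f^{\ast}(k)+\frac{u^{-2}}{u^{-\alpha}+b(k)}\,g^{\ast}(k)+\frac{S[\phi^{\ast}(k,\cdot);u]}{u^{-\alpha}+b(k)}.
\]

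It then remains to invert the Sumudu transform term by term. The first two terms are read off directly from \eqref{6} with $\rho=1$ and $\rho=2$, giving $t^{\alpha-1}E_{\alpha,\alpha}[-b(k)t^{\alpha}]$ and $t^{\alpha-2}E_{\alpha,\alpha-1}[-b(k)t^{\alpha}]$. For the forcing term I would recognize $\tfrac{1}{u(u^{-\alpha}+b(k))}=S[\psi;u]$ with $\psi(t)=t^{\alpha-1}E_{\alpha,\alpha}[-b(k)t^{\alpha}]$, so that the last summand has the form $u\,S[\psi;u]\,S[\phi^{\ast};u]$ and the convolution theorem \eqref{6a} returns $\int_0^t\xi^{\alpha-1}E_{\alpha,\alpha}[-b(k)\xi^{\alpha}]\,\phi^{\ast}(k,t-\xi)\,d\xi$. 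A concluding inverse Fourier transform $N(x,t)=\tfrac{1}{\sqrt{2\pi}}\int_{-\infty}^{+\infty}e^{-ikx}N^{\ast}(k,t)\,dk$ then reassembles the three contributions into \eqref{10}. The algebra and the table look-ups are routine; the genuine obstacle --- and essentially the only place the hypotheses \eqref{8} are used --- is establishing the Riesz--Feller symbol and the attendant $b(k)$, together with the positivity $\Re b(k)=\sum_j\mu_j|k|^{\gamma_j}\cos(\theta_j\pi/2)\ge0$ that follows from $\mu_j>0$ and $|\theta_j|\le1$, and which is what keeps the Mittag--Leffler representations convergent and the inversions legitimate for every real $k$.
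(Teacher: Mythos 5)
Your proposal is correct and follows essentially the same route as the paper: the only difference is that you apply the Fourier transform before the Sumudu transform, whereas the paper does the reverse, but since the two transforms act on independent variables the resulting algebraic equation for $\mathbb{N}^{\ast}(k,u)$ and its inversion via \eqref{6} and the convolution theorem \eqref{6a} are identical. Your added remark on the positivity of $\Re\, b(k)$ is a sensible supplement that the paper leaves implicit.
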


\begin{proof}
By using  Sumudu transform with the variable $t$ and applying
the given conditions $( \ref{8}) $ and $( \ref{9}) $ we get%

\begin{equation*}
S\left[ _{0}D_{t}^{\alpha }N( x,t) ;u\right] =S\left[
\sum_{j=1}^{n}\mu _{j~x}D_{\theta _{j}}^{\gamma _{j}}N( x,t) ;u%
\right] +S\left[ \phi ( x,t) ;u\right]
\end{equation*}%
\begin{equation*}
u^{-\alpha }\mathbb{N}( x,u) -\left[ \frac{_{0}D_{t}^{\alpha -1}N( x,0) }{u}\right] -\left[ \frac{_{0}D_{t}^{\alpha -2}N( x,0) }{u^{2}}\right]
=\sum_{j=1}^{n}\mu _{j~x}D_{\theta _{j}}^{\gamma _{j}}\mathbb{N}( x,u) +{\Phi}( x,u)
\end{equation*}%
\begin{equation*}
u^{-\alpha }\mathbb{N}( x,u) -u^{-1}f(x) -u^{-2}g(x)=\sum_{j=1}^{n}\mu _{j~x}D_{\theta _{j}}^{\gamma _{j}}\mathbb{N}( x,u) +{\Phi}( x,u)
\end{equation*}%
where $S[\phi(x,t);u]=\Phi(x,u)$ denotes Sumudu transform. The integral representation of (R-L) in $x$ domain  \cite{Saxena4} is
\begin{eqnarray*}
_{x}D_{\theta_j}^{\gamma_j}f(x) &=&  \frac{\Gamma (1+\gamma_j)}{\pi} \sin [(\gamma_j + \theta_j)\frac{\pi}{2}]\int_{0}^{\infty}\frac{f(x+\eta)-f(x)}{\eta^{1+\gamma_j}}d\eta  \\
&& +\frac{\Gamma (1+\gamma_j)}{\pi}\sin{[(\gamma_j - \theta_j)\frac{\pi}{2}]}\int_{0}^{\infty}\frac{f(x-\eta)-f(x)}{\eta^{1+\gamma_j}}d\eta
\end{eqnarray*}
By using Fourier-Transform with the variable x, we get%
\begin{eqnarray*}
F\left[ u^{-\alpha }\mathbb{N}( x,u)-u^{-1}f( x) -u^{-2}g( x) \right] &=&
F\left[\sum_{j=1}^{n}\mu _{j~x}D_{\theta _{j}}^{\gamma _{j}}\mathbb{N}( x,u) +\Phi( x,u) \right] \\
u^{-\alpha }\mathbb{N}^\ast( k,u) -u^{-1}f^{\ast}( k) -u^{-2}g^{\ast }( k) &=&-\sum_{j=1}^{n}\mu
_{j~}\Psi _{\theta _{j}}^{\gamma _{j}}( k) \mathbb{N}^{\ast}( k,u) +\Phi^{\ast}(k,u)
\end{eqnarray*}%
where * denoted by the Fourier transform. For R-L fractional derivative the Fourier transform is
\begin{equation}
F\left[_{x}D_{\theta _{j}}^{\gamma _{j}}f(x);k \right]=-\Psi _{\theta _{j}}^{\gamma _{j}}( k)f^{\ast}( k)
\label{aa}
\end{equation}
such that
\begin{equation}
\Psi _{\theta _{j}}^{\gamma _{j}}( k)=|k|^{\gamma_j}\exp[i ~\text
{sign}(k)\frac{\theta_j \pi}{2}],~ 0<\gamma_j\leq 2,~|\theta_j|\leq min\{\gamma_j,2-\gamma_j\}
\label{aa1}
\end{equation}
Solving $\mathbb{N}^{\ast}( k,u) $ yields,%
\begin{equation*}
\mathbb{N}^{\ast}( k,u) \left[ u^{-\alpha}+\sum_{j=1}^{n}\mu _{j~}\Psi _{\theta _{j}}^{\gamma _{j}}( k)\right] =u^{-1}f^{\ast }( k) +u^{-2}g^{\ast }( k)+
{\Phi^{\ast} }( k,u)
\end{equation*}%
\begin{equation*}
\mathbb{N}^{\ast}( k,u) =\frac{u^{-1}f^{\ast }(k) }{u^{-\alpha }+b(k) }+\frac{u^{-2}g^{\ast }(k) }{u^{-\alpha }+b(k)}+\frac{\Phi^{\ast} (k,u) }{u^{-\alpha }+b(k)}
\end{equation*}

where $b( k) =\sum_{j=1}^{n}\mu _{j~}\Psi _{\theta _{j}}^{\gamma_{j}}( k) $

Applying the the inverse Sumudu transform and using the convolution theorem for the last term $( \ref{5}) $
\begin{align*}
S^{-1}[\mathbb{N}^{\ast}( k,u)] &=f^{\ast }(k) S^{-1}\left[\frac{u^{-1} }{u^{-\alpha }+b(k) }\right]+g^{\ast }(k)\\
&\times S^{-1}\left[\frac{u^{-2} }{u^{-\alpha }+b(k)}\right]+S^{-1}\left[u \left(\frac{u^{-1}}{u^{-\alpha }+b(k)}\right)\Phi^{\ast} (k,u)\right]
\end{align*}
and the result given in $,$we get%
\begin{eqnarray*}
N^{\ast}( k,t) &=&t^{\alpha -1}E_{\alpha ,\alpha}[ -b(k)t^{\alpha }] f^{\ast }( k) +t^{\alpha-2}E_{\alpha ,\alpha-1 }[ -b(k)t^{\alpha }] g^{\ast }( k) \\
&&+\int_{0}^{t}\phi^{\ast}( k,t-\xi ) \xi ^{\alpha-1
}E_{\alpha ,\alpha}[ -b(k)\xi ^{\alpha }] d\xi
\end{eqnarray*}

Finally using the inverse Fourier transform, we get the required result.
\end{proof}

\subsection{Particular cases:}

If we take $\theta _{1}=\theta _{2}=...=\theta _{n}=0$ then, instead of $b(k)$ we get
\begin{equation*}
\sigma (k) =\sum_{j=1}^{n}\mu _{j}\Psi_{0}^{\gamma_{j}}(k)
= \sum_{j=1}^{n}\mu _{j}|k|^{\gamma_j}=C k^2
\end{equation*}
such that $C$ is constant given by $C=\sum_{j=1}^{n}\mu _{j}$ and we have the 
following corollaries.

\begin{corollary}
Consider%
\begin{equation}
_{0}D_{t}^{\alpha }N( x,t) =\sum_{j=1}^{n}\mu _{j~x}D_{0}^{\gamma
_{j}}N( x,t) +\phi ( x,t)  \label{11}
\end{equation}

which is the one dimensional non homogeneous unified fractional reaction diffusion model 
associated with time derivative $_{0}D_{t}^{\alpha }$ and Riez space derivatives $%
_{x}D_{0}^{\gamma _{1}},...,_{x}D_{0}^{\gamma n}$,  $n\in \mathbb{N}$ ,t$>0,x\in R;\alpha ,\gamma _{1},...,\gamma _{n}$ 
are real parameters with constraints%
\begin{equation}
\mu _{j}>0,0<\gamma _{j}\leq 2,j=1,2,...,n,1<\alpha \leq 2  \label{12}
\end{equation}

with initial condition
\begin{equation}
\left(
  \begin{array}{c}
    _{0}D_{t}^{\alpha -1}N( x,t) \\
    _{0}D_{t}^{\alpha-2}N( x,t) \\
  \end{array}
\right)_{t=0}=\left(
                \begin{array}{c}
                  f( x) \\
                  g( x) \\
                \end{array}
              \right)
,\,x\in \mathbb{R},\,\lim_{x\rightarrow
\pm \infty }N( x,t) =0,t>0  \label{13}
\end{equation}
then the following formula hold true
\begin{eqnarray}
N( x,t) &=&\frac{t^{\alpha -1}}{\sqrt{2\pi }}\int_{-\infty
}^{+\infty }e^{-ikx}E_{\alpha ,\alpha }[ -\sigma (k)t^{\alpha }] f^{\ast
}( k) dk  \notag \\
&&+\frac{t^{\alpha-2 }}{\sqrt{2\pi }}\int_{-\infty }^{+\infty
}e^{-ikx}E_{\alpha ,\alpha -1}[ -\sigma (k)t^{\alpha }] g^{\ast }(
k) dk  \notag \\
&&+\frac{1}{\sqrt{2\pi }}\int_{0}^{t}\xi ^{\alpha -1}\int_{-\infty }^{+\infty
}E_{\alpha ,\alpha }[ -\sigma (k)\xi ^{\alpha }] e^{-ikx}\phi ^{\ast
}( k,t-\xi ) dkd\xi  \label{14}
\end{eqnarray}
\end{corollary}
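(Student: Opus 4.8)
The plan is to obtain the Corollary as a direct specialization of the Theorem, since imposing $\theta_1=\theta_2=\cdots=\theta_n=0$ turns each Riesz--Feller space-fractional derivative into a symmetric Riesz derivative. The only step that carries any content is tracking how this choice simplifies the Fourier symbol $\Psi_{\theta_j}^{\gamma_j}(k)$, which enters the Theorem's solution solely through the quantity $b(k)$; everything else in the proof is reproduced verbatim.

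First I would repeat the time-Sumudu step from the Theorem. Applying $S[\,\cdot\,;u]$ to $(\ref{11})$, invoking the transform rule $(\ref{6b})$ and the initial data $(\ref{13})$, gives the algebraic relation
\begin{equation*}
u^{-\alpha }\mathbb{N}( x,u) -u^{-1}f(x) -u^{-2}g(x)=\sum_{j=1}^{n}\mu _{j~x}D_{0}^{\gamma _{j}}\mathbb{N}( x,u) +{\Phi}( x,u),
\end{equation*}
identical to the Theorem except that each $\theta_j$ has been set to $0$. I would then apply the spatial Fourier transform and use $(\ref{aa})$--$(\ref{aa1})$. Setting $\theta_j=0$ in $(\ref{aa1})$ collapses the phase factor $\exp[i\,\mathrm{sign}(k)\,\theta_j\pi/2]$ to $1$, so that $\Psi_{0}^{\gamma_j}(k)=|k|^{\gamma_j}$; hence the symbol $b(k)=\sum_j\mu_j\Psi_{\theta_j}^{\gamma_j}(k)$ is replaced by the real, even symbol $\sigma(k)=\sum_j\mu_j|k|^{\gamma_j}$ displayed just before the Corollary. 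Solving the resulting linear equation for $\mathbb{N}^{\ast}(k,u)$ reproduces the three-term expression from the Theorem's proof with $b(k)$ everywhere replaced by $\sigma(k)$.

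The remaining work is purely mechanical inversion. The first two terms are inverted by $(\ref{6})$ with the parameter choices $(\rho,\delta)=(1,1)$ and $(\rho,\delta)=(2,1)$, producing the kernels $t^{\alpha-1}E_{\alpha,\alpha}[-\sigma(k)t^\alpha]$ and $t^{\alpha-2}E_{\alpha,\alpha-1}[-\sigma(k)t^\alpha]$; the forcing term is inverted by the convolution theorem $(\ref{6a})$ combined with $(\ref{6})$. Taking the inverse Fourier transform then yields $(\ref{14})$. The one point worth verifying, rather than a genuine obstacle, is that the symbol loses its imaginary part precisely when $\theta_j=0$, so that $\sigma(k)$ is real and the Mittag-Leffler kernels above are the real-argument specializations of those in $(\ref{10})$. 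I would also flag the asserted identity $\sigma(k)=Ck^2$: this holds only when every $\gamma_j=2$, so it should be read as the special case of a single (or uniform) diffusion order rather than as a general consequence of the hypotheses $(\ref{12})$.
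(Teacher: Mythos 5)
Your proof is correct and follows essentially the same route as the paper: the paper obtains this corollary simply by setting $\theta_1=\cdots=\theta_n=0$ in Theorem 1, which collapses the Fourier symbol $\Psi_{\theta_j}^{\gamma_j}(k)$ to $|k|^{\gamma_j}$ and hence replaces $b(k)$ by $\sigma(k)=\sum_{j=1}^{n}\mu_j|k|^{\gamma_j}$ throughout the solution formula, exactly as you do. Your side remark is also well taken: the paper's further claim that $\sigma(k)=Ck^2$ with $C=\sum_{j=1}^{n}\mu_j$ holds only when every $\gamma_j=2$, though this does not affect the corollary as stated, since its conclusion is phrased in terms of $\sigma(k)$.
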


\begin{corollary}
If consider $g( x) =0$ under the condition
\begin{equation}
\mu _{j}>0,0<\gamma _{j}\leq 2,j=1,2,...,n,\left\vert \theta _{j}\right\vert
\leq \min_{1\leq j\leq n}( \gamma _{j},2-\gamma _{j}) ,0<\alpha
\leq 1  \label{15}
\end{equation}

with initial condition
\begin{equation}
_{0}D_{t}^{\alpha -1}N( x,0) =\delta ( x)
,_{0}D_{t}^{\alpha -2}N( x,0) =0,x\in \mathbb{R}%
,\lim_{x\rightarrow \pm \infty }N( x,t) =0,t>0  \label{16}
\end{equation}%
then the following formula hold true%
\begin{eqnarray}
N( x,t) &=&\frac{t^{\alpha -1}}{2\pi}\int_{-\infty
}^{+\infty }e^{-ikx}E_{\alpha ,\alpha }[ -b(k)t^{\alpha }]dk  \notag \\
&&+\frac{1}{\sqrt{2\pi }}\int_{0}^{t}\xi ^{\alpha -1}\int_{-\infty }^{+\infty
}E_{\alpha ,\alpha }[ -b(k)\xi ^{\alpha }] e^{-ikx}\phi ^{\ast
}( k,t-\xi ) dkd\xi  \label{17}
\end{eqnarray}
\end{corollary}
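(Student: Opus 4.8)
The plan is to obtain Corollary 2 as a direct specialization of the main Theorem, making only the substitutions $g(x)=0$ and $f(x)=\delta(x)$ in the solution formula $(\ref{10})$, and then accounting for the altered range $0<\alpha\leq 1$. First I would recall that $(\ref{10})$ was produced by transforming $(\ref{7a})$ in $t$ via the Sumudu transform, in $x$ via the Fourier transform, solving the resulting algebraic equation for $\mathbb{N}^{\ast}(k,u)$, and inverting both transforms. Because the source $\phi$ and the two initial data $f,g$ enter $(\ref{10})$ additively and independently, I can read off the effect of the specialization term by term rather than redoing the whole derivation.

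Setting $g(x)=0$ immediately annihilates the middle term of $(\ref{10})$ carrying $E_{\alpha,\alpha-1}$, consistent with the datum $_{0}D_{t}^{\alpha-2}N(x,0)=0$ imposed in $(\ref{16})$. The key computation is then the Fourier transform of the first datum $f(x)=\delta(x)$. Under the symmetric convention implicit in $(\ref{10})$, where the inverse transform carries $e^{-ikx}/\sqrt{2\pi}$, the forward transform gives $f^{\ast}(k)=F[\delta(x);k]=\frac{1}{\sqrt{2\pi}}\int_{-\infty}^{+\infty}e^{ikx}\delta(x)\,dx=\frac{1}{\sqrt{2\pi}}$, a constant. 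Substituting this constant into the first term of $(\ref{10})$ pulls the extra factor $1/\sqrt{2\pi}$ out of the integral and merges it with the existing prefactor, producing $\frac{t^{\alpha-1}}{2\pi}\int_{-\infty}^{+\infty}e^{-ikx}E_{\alpha,\alpha}[-b(k)t^{\alpha}]\,dk$, which is exactly the first term of $(\ref{17})$. The source term, being independent of $f$ and $g$, survives verbatim and reproduces the second term of $(\ref{17})$.

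The one point that needs care, and what I expect to be the main obstacle, is the shift of the admissible range from $1<\alpha\leq 2$ in the Theorem to $0<\alpha\leq 1$ here. In this range one has $m=1$, so the Sumudu transform formula $(\ref{6b})$ for the Riemann--Liouville derivative carries only the single boundary term $[_{0}D_{t}^{\alpha-1}N]_{t=0}/u$, rather than the two boundary terms used in the Theorem's proof. I would therefore re-run the transform step for $m=1$: the algebraic equation becomes $\mathbb{N}^{\ast}(k,u)=\frac{u^{-1}f^{\ast}(k)}{u^{-\alpha}+b(k)}+\frac{\Phi^{\ast}(k,u)}{u^{-\alpha}+b(k)}$, with no $u^{-2}g^{\ast}$ contribution arising at all. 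Inverting via $(\ref{6})$ with $\rho=1$ then yields $t^{\alpha-1}E_{\alpha,\alpha}[-b(k)t^{\alpha}]$ for the homogeneous part and the same convolution integral for the source part, after which the inverse Fourier transform together with the delta-function substitution delivers $(\ref{17})$. This confirms that, although $(\ref{10})$ was stated for $1<\alpha\leq 2$, its first and third terms extend unchanged to $0<\alpha\leq 1$ once the spurious $g$-term is discarded.
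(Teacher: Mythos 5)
Your proposal is correct and follows the same route the paper (implicitly) takes: the corollary is obtained by direct specialization of the Theorem's formula $(\ref{10})$ with $g(x)=0$ and $f(x)=\delta(x)$, where $f^{\ast}(k)=1/\sqrt{2\pi}$ absorbs into the prefactor to give $1/(2\pi)$. Your additional check that the range $0<\alpha\leq 1$ (so $m=1$ and only one boundary term survives in $(\ref{6b})$) leaves the first and third terms of $(\ref{10})$ unchanged is a point the paper glosses over, and it is handled correctly.
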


\begin{theorem}
\label{Th2} 
Consider the following fractional order unified one-dimensional
non-homogeneous reaction diffusion equation
\begin{equation}
_{0}D_{t}^{\alpha }N( x,t) +\lambda ~_{0}D_{t}^{\beta }N(x,t) =\sum_{j=1}^{n}\mu _{j~x}D_{\theta _{j}}^{\gamma _{j}}N(x,t) +\phi ( x,t)  \label{18}
\end{equation}
where $t>0,\lambda,x\in \mathbb{R},~\alpha ,\beta ,\theta _{1},...,\theta
_{n},\gamma _{1},...,\gamma _{n}$ are real parameters with the conditions:%
\begin{equation}
\mu _{j}>0,0<\gamma _{j}\leq 2,j=1,2,...,n,\left\vert \theta _{j}\right\vert
\leq \min_{1\leq j\leq n}( \gamma _{j},2-\gamma _{j}) ,1<\alpha
\leq 2,1<\beta \leq 2  \label{19}
\end{equation}%
with initial conditions%
\begin{eqnarray}
_{0}D_{t}^{\alpha -1}N( x,0)=f_{1}(x)
,\qquad _{0}D_{t}^{\alpha -2}N( x,0)= g_{1}(x),\notag \\
_{0}D_{t}^{\beta -1}N( x,0)=f_{2}(x)
,\qquad_{0}D_{t}^{\beta -2}N( x,0)=g_{2}(x)  \notag \\
 ,x\in \mathbb{R},\lim_{x\rightarrow \pm \infty}N( x,t) =0,t>0  \label{20}
\end{eqnarray}%
then the following formula holds true:
\begin{eqnarray*}
N(x,t) &=&\sum_{r=0}^{\infty} \left[ \frac{(-\lambda)^{r}t^{(\alpha-\beta)r+\alpha-1}}{\sqrt{2\pi}}\int_{-\infty}^{\infty}\left(e^{-ikx}\left[ f_1^{\ast}(k)+\lambda f_2^{\ast}(k)\right] E_{\alpha,\alpha+(\alpha -\beta ) r}^{r+1}[-b(k)t^{\alpha}]\right)dk \right]\\
&&+\sum_{r=0}^{\infty} \left[ \frac{(-\lambda)^{r}t^{(\alpha-\beta)r+\alpha-2}}{\sqrt{2\pi}}\int_{-\infty}^{\infty}\left(e^{-ikx}\left[ g_1^{\ast}( k)+\lambda g_2^{\ast}( k)\right] E_{\alpha,\alpha+(\alpha -\beta ) r-1}^{r+1}[-b(k)t^{\alpha}]\right)dk \right]\\
&&+\sum_{r=0}^{\infty }\left[ \frac{(-\lambda)^{r}}{\sqrt{2\pi}}\int_{-\infty}^{\infty}\left(e^{-ikx}\int_{0}^{t}\phi^{\ast}( k,t-\xi ) \xi^{(\alpha-\beta )r+\alpha -1 }E_{\alpha,\alpha+(\alpha-\beta)r}^{r+1}[-b(k)\xi^{\alpha }]d\xi\right )dk \right]
\end{eqnarray*}

\end{theorem}

\begin{proof}
Applying the Sumudu transform with respect to time variable $t$ and using
$( \ref{19}) $ and $( \ref{20}) $ we
get%
\begin{equation*}
S\left[ _{0}D_{t}^{\alpha }N( x,t) +a_{~0}D_{t}^{\beta }N(
x,t) ;u\right] =S\left[ \sum_{j=1}^{n}\mu _{j~x}D_{\theta
_{j}}^{\gamma _{j}}N( x,t) ;u\right] +S\left[ \phi (
x,t) ;u\right]
\end{equation*}%
\begin{eqnarray*}
&&u^{-\alpha }\mathbb{N}( x,u) -\left[ \frac{_{0}D_{t}^{\alpha -1}N( x,0) }{u}\right] -\left[ \frac{_{0}D_{t}^{\alpha -2}N( x,0) }{u^{2}}\right]\\
&&+\lambda u^{-\beta }\mathbb{N}( x,u)-\lambda\left[ \frac{_{0}D_{t}^{\beta -1}N( x,0) }{u}\right] -\lambda\left[ \frac{_{0}D_{t}^{\beta -2}N( x,0) }{u^{2}}\right]
=\sum_{j=1}^{n}\mu _{j~x}D_{\theta _{j}}^{\gamma _{j}}\mathbb{N}( x,u) +{\Phi}( x,u)
\end{eqnarray*}%
\begin{eqnarray*}
(u^{-\alpha }+\lambda u^{-\beta })\mathbb{N}( x,u) -\frac{(f_1(x)+\lambda f_2(x))}{u} -\frac{(g_1(x)+\lambda g_2(x))}{u^2}
=\sum_{j=1}^{n}\mu _{j~x}D_{\theta _{j}}^{\gamma _{j}}\mathbb{N}( x,u) +{\Phi}( x,u)
\end{eqnarray*}%
Applying the Fourier-Transform with respect to the space variable $x$ , we get%
\begin{eqnarray*}
&&F\left[(u^{-\alpha }+\lambda u^{-\beta })\mathbb{N}( x,u)-\frac{(f_1(x)+\lambda f_2(x))}{u} -\frac{(g_1(x)+\lambda g_2(x))}{u^2} \right]  \\
&&=F\left[\sum_{j=1}^{n}\mu _{j~x}D_{\theta _{j}}^{\gamma _{j}}\mathbb{N}( x,u) +\Phi( x,u) \right]  \\
&&\left[u^{-\alpha }+\lambda u^{-\beta }\right]\mathbb{N}^\ast( k,u)-\frac{f_1^{\ast}( k)+\lambda f_2^{\ast}( k)}{u} -\frac{g_1^{\ast}( k)+\lambda g_2^{\ast}( k)}{u^2}\\
&&=-\sum_{j=1}^{n}\mu_{j~}\Psi _{\theta _{j}}^{\gamma _{j}}( k) \mathbb{N}^{\ast}( k,u) +\Phi^{\ast}(k,u)
\end{eqnarray*}%

Solving $\mathbb{N}^{\ast}( k,u) $ yields,%
\begin{equation*}
\mathbb{N}^{\ast}( k,u) \left[ u^{-\alpha}+\lambda u^{-\beta }+\sum_{j=1}^{n}\mu _{j~}\Psi _{\theta _{j}}^{\gamma _{j}}( k)\right] =\frac{f_1^{\ast}( k)+\lambda f_2^{\ast}( k)}{u} +\frac{g_1^{\ast}( k)+\lambda g_2^{\ast}( k)}{u^2}+
{\Phi^{\ast} }( k,u)
\end{equation*}%
\begin{equation*}
\mathbb{N}^{\ast}( k,u) =\frac{u^{-1}(f_1^{\ast}( k)+\lambda f_2^{\ast}( k)) }{u^{-\alpha}+\lambda u^{-\beta }+b(k) }+\frac{u^{-2}(g_1^{\ast}( k)+\lambda g_2^{\ast}( k)) }{u^{-\alpha}+\lambda u^{-\beta }+b(k)}+\frac{\Phi^{\ast} (k,u) }{u^{-\alpha}+\lambda u^{-\beta }+b(k)}
\end{equation*}
where $b( k) =\sum_{j=1}^{n}\mu _{j~}\Psi _{\theta _{j}}^{\gamma
_{j}}( k) $
Applying the the inverse Sumudu transform and using the convolution theorem for the last term $( \ref{5}) $
\begin{eqnarray*}
S^{-1}[\mathbb{N}^{\ast}( k,u)] &=&\left(f_1^{\ast}( k)+\lambda f_2^{\ast}( k)\right) S^{-1}\left[\frac{u^{-1} }{u^{-\alpha}+\lambda u^{-\beta }+b(k) }\right]\\ &&+\left(g_1^{\ast}( k)+\lambda g_2^{\ast}( k)\right) S^{-1}\left[\frac{u^{-2} }{u^{-\alpha}+\lambda u^{-\beta }+b(k)}\right]\\
&&+S^{-1}\left[u \left(\frac{u^{-1}}{u^{-\alpha}+\lambda u^{-\beta }+b(k)}\right)\Phi^{\ast} (k,u)\right]
\end{eqnarray*}
we get%
\begin{eqnarray*}
N^{\ast}(k,t) &=&\left[ f_1^{\ast}(k)+\lambda f_2^{\ast}(k)\right] \sum_{r=0}^{\infty}(-\lambda)^{r}t^{(\alpha-\beta)r+\alpha -1 }E_{\alpha,\alpha+(\alpha -\beta ) r}^{r+1}[-b(k)t^{\alpha}]\\
&&+\left[ g_1^{\ast}( k)+\lambda g_2^{\ast}( k)\right] \sum_{r=0}^{\infty }(-\lambda)^{r}t^{(\alpha-\beta)r+\alpha-2}E_{\alpha,\alpha+(\alpha-\beta)r-1}^{r+1}[ -b(k)t^{\alpha }]\\
&&+\sum_{r=0}^{\infty }( -\lambda)^{r}\int_{0}^{t}\phi^{\ast}( k,t-\xi ) \xi^{(\alpha-\beta )r+\alpha -1 }E_{\alpha,\alpha+(\alpha-\beta)r}^{r+1}[-b(k)\xi^{\alpha }]d\xi
\end{eqnarray*}
Finally applying the inverse Fourier transform, we get
\begin{eqnarray*}
N(x,t) &=&\sum_{r=0}^{\infty} \left[ \frac{(-\lambda)^{r}t^{(\alpha-\beta)r+\alpha-1}}{\sqrt{2\pi}}\int_{-\infty}^{\infty}\left(e^{-ikx}\left[ f_1^{\ast}(k)+\lambda f_2^{\ast}(k)\right] E_{\alpha,\alpha+(\alpha -\beta ) r}^{r+1}[-b(k)t^{\alpha}]\right)dk \right]\\
&&+\sum_{r=0}^{\infty} \left[ \frac{(-\lambda)^{r}t^{(\alpha-\beta)r+\alpha-2}}{\sqrt{2\pi}}\int_{-\infty}^{\infty}\left(e^{-ikx}\left[ g_1^{\ast}( k)+\lambda g_2^{\ast}( k)\right] E_{\alpha,\alpha+(\alpha -\beta ) r-1}^{r+1}[-b(k)t^{\alpha}]\right)dk \right]\\
&&+\sum_{r=0}^{\infty }\left[ \frac{(-\lambda)^{r}}{\sqrt{2\pi}}\int_{-\infty}^{\infty}\left(e^{-ikx}\int_{0}^{t}\phi^{\ast}( k,t-\xi ) \xi^{(\alpha-\beta )r+\alpha -1 }E_{\alpha,\alpha+(\alpha-\beta)r}^{r+1}[-b(k)\xi^{\alpha }]d\xi\right )dk \right]
\end{eqnarray*}
\end{proof}
To get the results of theorem (1) we expand the summation into two parts $r=0$ and $\sum_{r=1}^{\infty}$, then we substitute by $\lambda=0$

\end{document}